\newcommand*{\gp}[1]{\langle\;#1\;\rangle}
\newcommand*{\order}[1]{\vert #1 \vert}
\newtheorem{theorem}{Theorem} 
\newtheorem{lemma}{Lemma}
\newtheorem{corollary}{Corollary}
 \DeclareMathOperator{\auto}{Aut}
\begin{document}
\title[Unitary Subgroups of commutative group algebras]{Unitary Subgroups of commutative group algebras of characteristic two}

\author{Zsolt Balogh, Vasyl Laver}

\address{
Department of Mathematical Science, United Arab Emirates University, UAE;
Department of Informative and Operating Systems and Technologies, Uzhhorod National University, Ukraine}
\email{\{baloghzsa,v.laver\}@gmail.com}

\thanks{The research was supported by UAEU Research Start-up (1) No. 31S370}

\subjclass[2000]{16S34, 16U60}
\keywords{ group ring, group of units, unitary subgroup}

\begin{abstract}
Let $FG$ be the group algebra of a finite $2$-group $G$ over a finite field $F$ of characteristic two and $\circledast$ an involution which arises from $G$. The $\circledast$-unitary subgroup of $FG$, denoted by $V_{\circledast}(FG)$, is defined to be the set of all normalized units $u$ satisfying the property $u^{\circledast}=u^{-1}$. In this paper we establish the order of $V_{\circledast}(FG)$ for all involutions $\circledast$ which arise from $G$, where $G$ is a finite cyclic $2$-group and show that all $\circledast$-unitary subgroups of $FG$ are not isomorphic.
\end{abstract}

\maketitle

\section{Introduction}

Let $FG$ be the group algebra of a $2$-group $G$ over a finite field $F$ of characteristic $2$. The set of all units in $FG$ with augmentation $1$ forms a group. This group, denoted by $V(FG)$, is called group of normalized units. The description of the structure of $V(FG)$ is a central problem in the theory of group algebras and it has been investigated by several authors. 
For an excellent survey on the group of units of the modular group algebras we refer the reader to \cite{Bovdi_survey}.

Let $\circledast$ be an involution of $FG$. An element $u \in V(FG)$ is called $\circledast$-unitary if $u^{\circledast}=u^{-1}$. The set of all $\circledast$-unitary elements of $V(FG)$ forms a subgroup of $V(FG)$ which is denoted by $V_{\circledast}(FG)$. The unitary subgroup related to the canonical involution (the $F$-linear extension of the involution on $G$ which sends each element of $G$ to its inverse) plays an important role of studying the structure of the group of units of group algebras \cite{Bovdi_Unitarity, Bovdi_Szakacs_II, Bovdi_Kovacs_I, Spinelli_2, Spinelli_1, Novikov}.

To establish the order of $V_*(FG)$ is a particularly challenging problem if the characteristic of $F$ is two. This subject has been investigated in several papers. A. Bovdi and Szak\'acs determined the structure of $*$-unitary subgroups for all abelian $p$-groups and finite fields of characteristic $p$ in \cite{Bovdi_Szakacs_III} and \cite{Bovdi_Szakacs_II}.  In \cite{Bovdi_Grichkov} V. Bovdi and Grichkov determined the invariants of the $\eta$-unitary and symmetric normalized units of $FG$, where $F$ is a field of two elements, $G$ is a finite abelian $2$-group, and $\eta$ is an involutory involution.

We know only partial results when $G$ is a non-abelian group. In \cite{Bovdi_Rosa_I} V. Bovdi and Rosa determined the order of $V_*(FG)$ for dihedral, quaternion and extraspecial $2$-groups if $F$ is a finite field of characteristic $2$. The structure of $V_*(FG)$  when $F$ is the field of two elements and $G$ is a group of order $16$ or a group of maximal class was established in \cite{Bovdi_Erdei_II} and \cite{Bovdi_Erdei_I}, respectively.
In \cite{Bovdi_Kovacs_I} the authors described all group algebras whose $*$-unitary subgroup is normal in $V(FG)$. 
The structures of $V_*(FQ_8)$ and $V_*(FD_8)$ were established in \cite{Creedon_Gildea_I} and \cite{Creedon_Gildea_II} where $Q_8$ is the quaternion, $D_8$ is the dihedral group of order $8$, respectively and $F$ is a finite field of characteristic two. 

There are only limited number of results in non-modular case. In \cite{Sharma_I} the order of $V_*(F_{2^k}D_{2N})$ was determined, where $D_{2N}$ is the dihedral group of order $2N$. 

A. Bovdi and Szak\'acs also gave the order of the $\circledast$-unitary subgroups when the characteristic of field $F$ is odd and $\circledast$ arises from an abelian $p$-group $G$ in \cite{Bovdi_Szakacs_I}. Additionally, in \cite{Bovdi_Rozgonyi} the structure of unitary subgroups have been investigated for different involutions, where $G$ is the dihedral group. In \cite{Balogh_Creedon_Gildea} the structure of  
$V_{\circledast}(FG)$ was described for all non-abelian groups $G$, where the order of $G$ is $8$ and $\circledast$ arises from $G$.

In this paper we establish the order of $V_{\circledast}(FG)$, where $\circledast$ is an involution which arises from a finite cyclic $2$-group $G$. Using these results we prove the following theorem. 

\begin{theorem}\label{theorem_1}
	Let $F$ be a finite field of characteristic two. Then the $\circledast$-unitary subgroups of $FC_{2^n}$ $(n>2)$, where $\circledast$ arises from $C_{2^n}$ are not isomorphic.
\end{theorem}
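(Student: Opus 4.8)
The proof rests on the order formulas for $V_{\circledast}(FC_{2^n})$ obtained in the preceding sections. Fix $G=\langle x\rangle\cong C_{2^n}$. An involution of $FC_{2^n}$ arising from $G$ is the $F$-linear extension $\circledast_a$ of an automorphism $x\mapsto x^{a}$ with $a^{2}\equiv 1\pmod{2^{n}}$, and for $n>2$ the residues with this property are exactly $a\in\{1,\ 2^{n-1}-1,\ 2^{n-1}+1,\ 2^{n}-1\}$, giving three non-identity involutions; if $\circledast_{1}=\mathrm{id}$ is counted as well, then $V_{\mathrm{id}}(FC_{2^n})=\Omega_{1}\bigl(V(FC_{2^n})\bigr)$ is elementary abelian and is therefore at once distinguished from all the others. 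Since $FC_{2^n}$ is commutative, every $V_{\circledast_a}(FC_{2^n})$ is a finite abelian $2$-group, so its isomorphism type is determined by its order together with the numbers $\order{\Omega_{i}(V_{\circledast_a})}$ for all $i\ge1$, where $\Omega_{i}(A)=\{a\in A:a^{2^{i}}=1\}$.

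The first step is to compare the orders $\order{V_{\circledast_a}(FC_{2^n})}$ furnished by the earlier results: any two of these that differ already correspond to non-isomorphic unitary subgroups, and this disposes of most pairs. For a pair $\circledast_a,\circledast_b$ with $\order{V_{\circledast_a}(FC_{2^n})}=\order{V_{\circledast_b}(FC_{2^n})}$ one must descend to the finer invariants. The most accessible is $\order{\Omega_{1}}$: writing a normalized unit as $u=1+z$ with $z\in\mathfrak r=\rad(FC_{2^n})$, the conditions $u^{2}=1$ and $u^{\circledast_a}=u$ translate into $z^{2}=0$ and $z^{\circledast_a}=z$, so $\Omega_{1}(V_{\circledast_a})=1+S_a$, where $S_a$ is the $F$-subspace of $\circledast_a$-symmetric square-zero elements of $\mathfrak r$. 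Under the identification $FC_{2^n}\cong F[t]/(t^{2^{n}})$, $t=x-1$, the square-zero elements of $\mathfrak r$ form the $F$-span of $t^{2^{n-1}},\dots,t^{2^{n}-1}$, and on this $2^{n-1}$-dimensional space $\circledast_a$ acts $F$-linearly and unipotently through $t\mapsto(1+t)^{a}-1$; computing its fixed subspace reduces $\order{\Omega_{1}(V_{\circledast_a})}$ to a rank computation over $F$, which can be carried out explicitly for each of the three values of $a$.

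If for some pair even the orders and $\order{\Omega_{1}}$ agree — as can happen, for instance, for $a=2^{n-1}-1$ and the canonical $a=2^{n}-1$ — one climbs the filtration, computing $\order{\Omega_{2}(V_{\circledast_a})}$ and, if necessary, higher terms; equivalently one may compare exponents, or the image of the norm homomorphism $N_a\colon u\mapsto u\,u^{\circledast_a}$, for which $V_{\circledast_a}=\ker N_a$, so that $\order{V_{\circledast_a}(FC_{2^n})}=\order{V(FC_{2^n})}/\order{\im N_a}$ and $\im N_a$ lies inside the symmetric units. Each of these quantities is again governed by the unipotent action of $t\mapsto(1+t)^{a}-1$ on the radical and its powers, hence is in principle explicitly computable.

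The main obstacle is precisely this bookkeeping for the two ``close'' involutions $\circledast_{2^{n-1}-1}$ and $\circledast_{2^{n-1}+1}$ (and, at the level of $\Omega_{1}$, for $\circledast_{2^{n-1}-1}$ versus $\circledast_{2^{n}-1}$): one must track the action of these involutions on the full radical filtration of $F[t]/(t^{2^{n}})$ accurately enough to exhibit, for every $n>2$ and every finite field $F$ of characteristic two, an invariant on which the unitary subgroups genuinely differ. The hypothesis $n>2$ is essential, since for $n\le2$ the congruence $a^{2}\equiv1\pmod{2^{n}}$ has at most one non-identity solution (and indeed $V_{*}(FC_{4})=V(FC_{4})$), so there is nothing to compare.
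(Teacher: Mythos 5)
Your overall strategy is the right one and matches the paper's in outline: the order formulas separate the canonical involution's unitary subgroup (order $2\cdot\order{F}^{2^{n-1}}$) from the other two (each of order $\order{F}^{2^{n-1}}$), and the remaining pair $a=2^{n-1}\pm1$ must be separated by a finer abelian-group invariant such as the exponent or the $\Omega_i$-filtration. But the proposal stops exactly at the point where the theorem is actually proved: you name the comparison of $\circledast_{2^{n-1}-1}$ with $\circledast_{2^{n-1}+1}$ as ``the main obstacle'' and assert that the relevant ranks are ``in principle explicitly computable'' without computing them or exhibiting any invariant on which the two subgroups differ. That is a genuine gap, not bookkeeping: a priori both groups could have the same invariants, and nothing in the proposal rules this out. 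The paper closes this gap with two short concrete facts: (i) $V_{\sigma_4}(FC_{2^n})$ is elementary abelian, because $xx^{\sigma_4}=1$ forces $x=x^{\sigma_4}$ (by the explicit formula of Lemma~\ref{lemma_5}), whence $x^2=xx^{\sigma_4}=1$; and (ii) $(a^2)^{\sigma_3}=a^{-2}$, so $a^2\in V_{\sigma_3}(FC_{2^n})$ and this subgroup has exponent at least $2^{n-1}>2$. Some such explicit computation is indispensable.

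A secondary error: your parenthetical claim that if the identity automorphism is counted then $V_{\mathrm{id}}(FC_{2^n})=\Omega_1(V(FC_{2^n}))$ is ``at once distinguished from all the others'' by being elementary abelian is false. One checks that $\order{V_{\mathrm{id}}(FC_{2^n})}=\order{F}^{2^{n-1}}$ (the square-zero part of the radical has $F$-dimension $2^{n-1}$), which is the same order as $V_{\sigma_4}(FC_{2^n})$, and both are elementary abelian, hence isomorphic. So the identity must be excluded from the list of involutions for the theorem to hold at all; the paper's proof tacitly does so by comparing only $\sigma_2$, $\sigma_3$, $\sigma_4$.
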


\section{$\circledast$-unitary subgroups of $FC_{2^n}$}

Let $G$ be a finite $2$-group. We will denote by $G[2^i]$ the subgroup of $G$ generated by the elements of order $2^i$.
We use the notation $G^{2^i}$ for the subgroup $\gp{ g^{2^i} \,\vert\,g \in G }$. Throughout this paper $|S|$ denotes the cardinality of the finite set $S$, $\order{g}$ -- the order of $g\in G$, $C_n$ -- cyclic group of order $n$, and $\auto{G}$ denotes the automorphism group of the group $G$. Also, in the sequel of the paper we consider $F$ as finite field of characteristic two, and where it is needed we specify the order of the field in the lower index (i.e. $F_{2^n}$ is a field of $2^n$ elements).

The following two lemmas will be useful.
\begin{lemma}(\cite[Theorem 2]{Bovdi_Szakacs_III})\label{szakacs}
	Let $G$ be a finite abelian $2$-group and ${F}$ a finite field of characteristic two.
	Then \[\order{V_*({F}G)}=\order{G^2[2]}\cdot |F|^{\frac{1}{2}(\order{G}+\order{G[2]})-1}.\]
\end{lemma}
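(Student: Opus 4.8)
\medskip
\noindent\emph{Proof proposal.}
The plan is to exploit that $FG$ is commutative. Then the map
$\phi\colon V(FG)\to V(FG)$, $\phi(u)=uu^{*}$, is a group homomorphism, since
$\phi(uv)=uv(uv)^{*}=uvv^{*}u^{*}=(uu^{*})(vv^{*})=\phi(u)\phi(v)$,
and its kernel is $\{u:uu^{*}=1\}=\{u:u^{*}=u^{-1}\}=V_{*}(FG)$. Hence
$\order{V_{*}(FG)}=\order{V(FG)}\big/\order{\im\phi}$. Since the augmentation ideal
$\Delta$ of $FG$ has $F$-codimension $1$ and $V(FG)=1+\Delta$, we have
$\order{V(FG)}=\order{F}^{\order{G}-1}$, so the whole problem reduces to computing
$\order{\im\phi}$; the claimed formula is equivalent to
$\order{\im\phi}=\order{F}^{(\order{G}-\order{G[2]})/2}\big/\order{G^{2}[2]}$.

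\smallskip
Next I would locate $\im\phi$. Writing $u=\sum_{h\in G}a_{h}h$ with $\sum a_{h}=1$,
the element $uu^{*}$ is symmetric, its coefficient at $1$ is
$\bigl(\sum_{h}a_{h}\bigr)^{2}=1$, and for an involution $t\neq1$ its coefficient at $t$
equals $\sum_{h}a_{h}a_{th}$, which vanishes because $h\mapsto th$ is a fixed-point-free
involution of $G$ and $\ch F=2$. Thus $\im\phi\subseteq 1+W$, where
$W:=\gp{\,h+h^{-1}:h\in G\,}$ is the image of the additive map $a\mapsto a+a^{*}$;
as $\{h,h^{-1}\}$ runs over the size-$2$ orbits of inversion the elements $h+h^{-1}$ form a
basis of $W$, so $\dim_{F}W=\tfrac12(\order{G}-\order{G[2]})$ and
$\order{1+W}=\order{F}^{(\order{G}-\order{G[2]})/2}$. (The symmetric elements span a
subspace of dimension $\tfrac12(\order{G}+\order{G[2]})$, so the symmetric normalized units
number $\order{F}^{(\order{G}+\order{G[2]})/2-1}$; passing from them to $1+W$ uses up exactly
the symmetry and the $\order{G[2]}-1$ automatic ``vanishing at involutions'' relations, which
is where the field part of the formula comes from.) Comparing with the target, it remains to
prove the single identity
\[
[\,1+W:\im\phi\,]=\order{G^{2}[2]}.
\]

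\smallskip
This identity is the crux. Writing $u=1+a$ with $a\in\Delta$ gives
$uu^{*}=1+(a+a^{*})+aa^{*}$, and the \emph{linear} term $a+a^{*}$ already runs over all of
$W$ as $a$ runs over $\Delta$; hence the whole obstruction sits in the \emph{quadratic} term
$aa^{*}$, which lies in $\Delta^{2k}$ when $a\in\Delta^{k}$ and so has strictly larger weight in
the $\Delta$-adic (Jennings) filtration. I would use this triangular feature, inducting along
$FG\supset\Delta\supset\Delta^{2}\supset\cdots$ (equivalently, passing to the associated graded
algebra, on which $*$ still acts and $\phi$ becomes $1+{*}$ plus a graded-quadratic correction),
to read off the image rank degree by degree. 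The expectation --- already visible in $FC_{4}$,
where for $u=\sum_{j=0}^{3}a_{j}g^{j}$ with $\sum a_{j}=1$ the coefficient of $uu^{*}$ at $g$ is
$p^{2}+p$ with $p=a_{0}+a_{2}$ --- is that the surviving obstruction is of Artin--Schreier type:
for each cyclic direct factor $\gp{g_{i}}$ of $G$ of order $\ge4$, the coefficient of $uu^{*}$
at $g_{i}$ is forced into the image of $\wp\colon x\mapsto x^{2}+x$ on $F$, an index-$2$ subgroup
of $(F,+)$ regardless of $\order{F}$, while factors of order $2$ contribute nothing. Since the
number of cyclic factors of $G$ of order $\ge4$ equals $\log_{2}\order{G^{2}[2]}$, assembling
these independent contributions gives $[\,1+W:\im\phi\,]=\order{G^{2}[2]}$ and hence the formula.

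\smallskip
The main obstacle is precisely this last step: proving that the quadratic term contributes
\emph{exactly} the asserted index, i.e.\ that the Artin--Schreier obstructions attached to the
distinct $C_{4}$-or-larger factors are mutually independent and that no further defect is hidden
in higher powers of $\Delta$. A clean way to organise it is to first dispose of the cyclic case
$G=C_{2^{m}}$ by an explicit computation in $F[x]\big/(x+1)^{2^{m}}$, where one checks directly
that the displayed Artin--Schreier condition is the only constraint (so the index is $2$ for
$m\ge2$ and $1$ for $m\le1$), and then bootstrap to an arbitrary abelian $2$-group; controlling
the unit group of $F(A\times B)$ in terms of those of $FA$ and $FB$ is itself the delicate point
of the bootstrap. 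As a sanity check, the companion homomorphism $\psi\colon u\mapsto u(u^{*})^{-1}$
has kernel $V^{+}(FG)$ and image contained in $V_{*}(FG)$, which already gives the lower bound
$\order{V_{*}(FG)}\ge\order{F}^{(\order{G}-\order{G[2]})/2}$; the formula exceeds this by the
factor $\order{G^{2}[2]}\cdot\order{F}^{\order{G[2]}-1}$, consistently with the index computation
above.
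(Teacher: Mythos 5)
This lemma is not proved in the paper at all: it is quoted verbatim from Bovdi--Szak\'acs \cite[Theorem 2]{Bovdi_Szakacs_III}, so there is no internal proof to compare yours against. Judged on its own, your proposal sets up the right framework but stops short of a proof. The reduction is correct and cleanly executed: $\phi(u)=uu^{*}$ is a homomorphism with kernel $V_{*}(FG)$, $\order{V(FG)}=\order{F}^{\order{G}-1}$, and the support computation (coefficient $1$ at the identity, coefficient $0$ at every involution, symmetry of $uu^{*}$) correctly places $\im\phi$ inside $1+W$ with $\dim_F W=\tfrac12(\order{G}-\order{G[2]})$, so the lemma is indeed equivalent to $[\,1+W:\im\phi\,]=\order{G^{2}[2]}$. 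Your sanity checks (the $FC_4$ computation, the lower bound via $u\mapsto u(u^{*})^{-1}$) are also correct.

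The genuine gap is exactly where you say it is, and it is the entire content of the theorem: you never prove that the index is $\order{G^{2}[2]}$. What you offer is a heuristic --- each cyclic direct factor of order $\ge 4$ should impose one Artin--Schreier condition $c\in\im(x\mapsto x^{2}+x)$ of index $2$, these should be independent, and nothing further should be lost in higher powers of $\Delta$ --- verified only for $C_{4}$. None of the three assertions (exactness of the contribution per factor, independence across factors, absence of hidden defects deeper in the Jennings filtration) is established, and the proposed induction along $\Delta\supset\Delta^{2}\supset\cdots$ is not carried out even for $G=C_{2^{m}}$ with $m\ge 3$, where the quadratic term $aa^{*}$ genuinely mixes filtration degrees and the claim that ``the displayed condition is the only constraint'' needs a real argument. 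Likewise the bootstrap from cyclic groups to arbitrary abelian $2$-groups is flagged as delicate but not supplied; $V(F(A\times B))$ does not decompose as a product of $V(FA)$ and $V(FB)$, so independence of the obstructions is not automatic. In short: the proposal is a correct and well-motivated reduction plus a plausible conjecture for the remaining index, not a proof.
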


\begin{lemma}(\cite[Proposition 16, p.135]{Dummit_book})\label{dummit}
	$\auto{C_n} \cong (\mathbb{Z}_n)^\times$.
\end{lemma}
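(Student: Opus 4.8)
The plan is to fix a generator and write $C_n = \gp{g}$, then build an explicit isomorphism from $(\mathbb{Z}_n)^\times$ to $\auto{C_n}$ by recording where an automorphism sends that generator. First I would note that any homomorphism out of $C_n$ is completely determined by the image of $g$, since every element is a power of $g$; in particular an automorphism $\phi$ is pinned down by the single element $\phi(g)$. The key structural fact I would establish is that $\phi(g)$ must itself be a generator: an automorphism preserves element orders, so $\order{\phi(g)} = \order{g} = n$, and an element of order $n$ in $C_n$ generates the whole group.

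Next I would invoke the standard description of generators of a cyclic group, namely that $g^k$ has order $n/\gcd(k,n)$ and hence is a generator exactly when $\gcd(k,n)=1$. This lets me define, for each class $\bar{k} \in (\mathbb{Z}_n)^\times$, the map $\phi_k\colon g^m \mapsto g^{km}$, and I would check that $\phi_k$ is a well-defined automorphism: well-defined because $g$ has order $n$ so the exponent only matters modulo $n$, and bijective because $g^k$ is a generator. The assignment $\Phi\colon \bar{k} \mapsto \phi_k$ is then the candidate isomorphism $(\mathbb{Z}_n)^\times \to \auto{C_n}$.

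The remaining task is to verify that $\Phi$ is a group isomorphism, and I expect the multiplicativity check to be the one point requiring a moment's care, since the operation on $\auto{C_n}$ is composition while the operation on $(\mathbb{Z}_n)^\times$ is multiplication. Computing $(\phi_k \circ \phi_l)(g) = \phi_k(g^l) = g^{kl} = \phi_{kl}(g)$ shows $\Phi(\bar k)\Phi(\bar l) = \Phi(\overline{kl})$, so $\Phi$ is a homomorphism; injectivity follows because $\phi_k = \phi_l$ forces $g^k = g^l$ and hence $k \equiv l \pmod n$; surjectivity follows from the first paragraph, as every automorphism sends $g$ to some generator $g^k$ with $\gcd(k,n)=1$ and therefore coincides with $\phi_k$. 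The main (and essentially only) obstacle is conceptual rather than computational: making explicit the correspondence between automorphisms of a cyclic group and admissible images of the generator, after which the isomorphism with the unit group drops out immediately.
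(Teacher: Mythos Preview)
Your argument is correct and is precisely the standard proof one finds in the cited reference (Dummit--Foote, Proposition~16, p.~135): pin down an automorphism by the image of a fixed generator, identify the admissible images with powers $g^k$ where $\gcd(k,n)=1$, and verify that $\bar k \mapsto (g \mapsto g^k)$ is a group isomorphism. The paper itself does not supply a proof at all---it merely quotes the result as \lemref{dummit} with a citation---so there is no alternative approach to compare against; your write-up simply fills in what the paper leaves to the reference.
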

From Lemma \ref{dummit} and \cite[Theorem 2', p.43]{Book_Ireland_Rosen} we get 

\begin{corollary}\label{automorphism_C}
	$\auto{C_{2^n}} \cong C_{2^{n-2}}\times C_2$ $(n>2)$.
\end{corollary}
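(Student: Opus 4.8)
The plan is to reduce the statement to the structure of a multiplicative group of residues and then give the short structural argument. First I would apply \lemref{dummit} with the cyclic group taken to be $C_{2^n}$, which gives $\auto{C_{2^n}} \cong (\mathbb{Z}_{2^n})^\times$. It therefore suffices to determine the isomorphism type of the unit group $(\mathbb{Z}_{2^n})^\times$, a finite abelian $2$-group of order $\varphi(2^n)=2^{n-1}$. The strategy is to produce two cyclic subgroups whose internal direct product exhausts this group.

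The main obstacle, and the only genuine computation, is to show that the residue class of $5$ has order exactly $2^{n-2}$ modulo $2^n$. I would establish by induction on $k\ge 0$ the congruence $5^{2^{k}}\equiv 1+2^{k+2}\pmod{2^{k+3}}$. The base case is $5\equiv 1+2^2\pmod{2^3}$, and the inductive step follows by squaring: writing $5^{2^{k}}=1+2^{k+2}a$ with $a$ odd, one gets $5^{2^{k+1}}=1+2^{k+3}a+2^{2k+4}a^{2}\equiv 1+2^{k+3}\pmod{2^{k+4}}$, since $2k+4\ge k+4$ and $a$ is odd. Taking $k=n-2$ shows $5^{2^{n-2}}\equiv 1\pmod{2^n}$, while taking $k=n-3$ (valid for $n>2$) shows $5^{2^{n-3}}\equiv 1+2^{n-1}\not\equiv 1\pmod{2^n}$. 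Hence $\gp{5}$ is cyclic of order $2^{n-2}$.

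It then remains to combine $\gp{5}$ with the subgroup $\gp{-1}$ of order two. Every power of $5$ is congruent to $1$ modulo $4$, whereas $-1\equiv 3\pmod 4$; thus $-1\notin\gp{5}$ and $\gp{-1}\cap\gp{5}=1$. Consequently the internal product $\gp{-1}\times\gp{5}$ has order $2\cdot 2^{n-2}=2^{n-1}=\order{(\mathbb{Z}_{2^n})^\times}$, so it is the whole group, giving $(\mathbb{Z}_{2^n})^\times\cong C_2\times C_{2^{n-2}}$. Transporting this back through the isomorphism of the first paragraph yields $\auto{C_{2^n}}\cong C_{2^{n-2}}\times C_2$ for $n>2$. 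Alternatively, this structure theorem for $(\mathbb{Z}_{2^n})^\times$ can simply be quoted from \cite[Theorem 2', p.43]{Book_Ireland_Rosen}, which is the route taken in the text.
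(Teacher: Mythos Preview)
Your proof is correct and follows exactly the route the paper takes: apply \lemref{dummit} to identify $\auto{C_{2^n}}$ with $(\mathbb{Z}_{2^n})^\times$, and then use the structure of that unit group, which the paper simply cites from \cite[Theorem 2', p.43]{Book_Ireland_Rosen}. You have merely unpacked that citation with the standard $\langle -1\rangle\times\langle 5\rangle$ argument, so there is no substantive difference.
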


In the remainder of the paper we assume $n$ to be greater than $2$.

Therefore the elements of order two in $\auto{C_{2^n}}$ from a subgroup which is isomorphic to the Klein group. 
Let $\sigma_1$ be the identity
automorphism and
\[
\sigma_2 : 
a \mapsto a^{-1};
\qquad
\sigma_3 : 
a \mapsto a^{2^{n-1}-1} ;
\qquad
\sigma_4 :
a \mapsto a^{2^{n-1}+1}. 
\]

The involution $\sigma_2$ is the canonical involution of $FG$. According to Lemma \ref{szakacs} we have that $\order{V_*(FC_{2^n})}=2\cdot \order{F}^{\frac{\order{C_{2^n}}}{2}}$.

We will denote by $\circledast$ the linear extension of the $\circledast$ involution of $G$ into $FG$ and the set of $\circledast$-symmetric elements of $G$ by $G_{\circledast}=\{ g\in G \;|\; g=g^{\circledast} \}$. Every $\circledast$-symmetric element of $FG$ (i.e. $x=x^{\circledast}$) can be written in the form
\[
\sum_{g\in G_{\circledast}} \alpha_g g + \sum_{g\not\in G_{\circledast}} \beta_g (g+g^{\circledast }).
\] To avoid confusion in the sequel of the paper, where it is needed, we will specify the involution by writing $\sigma_i$, $i\in \{3,4\}$ instead of $\circledast$.

To give a formula for the order of the unitary subgroup with respect to $\sigma_3$ we need more consideration.

Let $H$ be a normal subgroup of $G$ and $I(H)$ the ideal of $FG$ generated by the set $\{(1+h) \;|\; h\in H\}$. $I(H)$ can be considered as an $F$-modulus with basis elements $u(1+h)$, where $u \in R(G/H)$ and $h\ne 1$. 
As a conclusion we have $|I(H)|=F^{\frac{|C_{2^n|}}{2}}$. 
It is well-known that $FG/I(H)\cong F(G/H)$ and we denote by $\Psi$ the corresponding natural homomorphism. Let us denote by  $\overline{G}=G/H$ and by $V_{\sigma_3}(F\overline{G})$ the unitary subgroup of the factor algebra $FG/I(H)$, where $\overline{x}^{\sigma_3}$ is the induced action of the involution $\sigma_3$ on $\overline{x} \in FG/I(H)$. It is clear that the set 
\[ N^{\sigma_3}_{\Psi}=\{x\in V(FG) \;|\; \Psi(x) \in V_{\sigma_3}(F\overline{G}) \} \]
forms a subgroup in $V(FG)$. Furthermore, the set $I(H)^+=\{1+x \, |\, x\in I(H) \}$ forms a normal subgroup in $V(FG)$. We define $S_H$ to be the group generated by the elements $\{ xx^{\sigma_3} \;|\; x\in N^{\sigma_3}_{\Psi} \}$. Based on the fact that $xx^{\sigma_3} \in 1+\ker(\Psi)=I(H)^+$
we can see that $S_H$ is a subgroup of $I(H)^+$.

\begin{lemma}\label{lemma_3}
	Let $F$ be a finite field of characteristic two. Then the order of
	$V_{\sigma_3}(FC_{2^n})$ equals $\order{F}^{2^{n-1}}$. 
\end{lemma}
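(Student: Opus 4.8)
The plan is to realize $V_{\sigma_3}(FC_{2^n})$ as an extension involving the subgroup $I(H)^+$ for a well-chosen normal subgroup $H$, and to compute the order by a counting argument that splits the unitary condition into a part detected on a quotient and a part living inside an augmentation ideal. Write $G=C_{2^n}=\gp{a}$. Since $\sigma_3$ sends $a\mapsto a^{2^{n-1}-1}$, note that $\sigma_3$ fixes $a^2$ (indeed $a^{2(2^{n-1}-1)}=a^{2^n}a^{-2}=a^{-2}$... one must be careful: in fact $\sigma_3$ restricted to $G^2=\gp{a^2}$ coincides with inversion, so $\sigma_3$ agrees with the canonical involution $\sigma_2$ on the index-two subgroup $\gp{a^2}$, while on the coset $a\gp{a^2}$ it differs). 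The natural choice is $H=\gp{a^{2^{n-1}}}=G[2]$, the unique subgroup of order two; then $\sigma_3$ fixes $a^{2^{n-1}}$, so $\sigma_3$ induces a well-defined involution $\overline{\sigma_3}$ on $\overline G=G/H\cong C_{2^{n-1}}$. One checks that on $\overline G$ the induced involution is exactly inversion, i.e. the canonical involution $*$ on $F\overline G$, because $a^{2^{n-1}-1}\equiv a^{-1}\pmod H$. Hence $V_{\overline{\sigma_3}}(F\overline G)=V_*(F\overline G)$, whose order is given by \lemref{szakacs} applied to $\overline G=C_{2^{n-1}}$: it equals $2\cdot\order F^{2^{n-2}}$.

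Next I would analyze the homomorphism $\Psi\colon FG\to F\overline G$ restricted to $V(FG)$ and the subgroup $N^{\sigma_3}_\Psi=\{x\in V(FG)\mid \Psi(x)\in V_{\sigma_3}(F\overline G)\}$. The key structural observation is that $V_{\sigma_3}(FC_{2^n})$ sits inside $N^{\sigma_3}_\Psi$: if $x^{\sigma_3}=x^{-1}$ then applying $\Psi$ gives $\Psi(x)^{\overline{\sigma_3}}=\Psi(x)^{-1}$. Conversely, for $x\in N^{\sigma_3}_\Psi$ the element $y=xx^{\sigma_3}$ lies in $1+\ker\Psi=I(H)^+$, and $x$ is $\sigma_3$-unitary precisely when $y=1$. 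So the obstruction to unitarity of $x$ is measured by the element $xx^{\sigma_3}\in S_H\le I(H)^+$. The plan is therefore to count via the exact-sequence-type identity
\[
\order{V_{\sigma_3}(FC_{2^n})}=\frac{\order{N^{\sigma_3}_\Psi}}{\order{S_H}},
\]
once we verify that the map $x\mapsto xx^{\sigma_3}$ from $N^{\sigma_3}_\Psi$ onto $S_H$ has fibres that are cosets of the kernel $V_{\sigma_3}(FC_{2^n})$ (this requires checking that $(xz)(xz)^{\sigma_3}=xx^{\sigma_3}$ for $z\in V_{\sigma_3}$, which uses that everything relevant commutes because $FG$ is commutative, $G$ being cyclic). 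Commutativity makes this genuinely easy: $(xz)(xz)^{\sigma_3}=xzz^{\sigma_3}x^{\sigma_3}=xx^{\sigma_3}$.

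It then remains to compute $\order{N^{\sigma_3}_\Psi}$ and $\order{S_H}$. For the former, $\Psi|_{V(FG)}\colon V(FG)\to V(F\overline G)$ is surjective with kernel $I(H)^+$ of order $\order F^{2^{n-1}-1}$ (the augmentation ideal $I(H)$ in the commutative algebra $FG$ has $F$-dimension $\order{G}/2\cdot$... more precisely $\dim_F I(H)=\order G-\order{\overline G}=2^{n-1}$, wait: $\dim_F FG-\dim_F F\overline G=2^n-2^{n-1}=2^{n-1}$, but $I(H)^+$ consists of $1+x$ with $x\in I(H)$ and augmentation forces... actually $I(H)$ is $2^{n-1}$-dimensional over $F$, so $\order{I(H)}=\order F^{2^{n-1}}$; since $1+x$ for $x\in I(H)$ automatically has augmentation $1$ and all such are units, $\order{I(H)^+}=\order F^{2^{n-1}}$). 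Hence $\order{N^{\sigma_3}_\Psi}=\order{V_{\sigma_3}(F\overline G)}\cdot\order{I(H)^+}=2\cdot\order F^{2^{n-2}}\cdot\order F^{2^{n-1}}$. For $\order{S_H}$, I would identify $S_H$ explicitly as the image of the squaring-type map $x\mapsto xx^{\sigma_3}$ on $N^{\sigma_3}_\Psi$; the expectation — and this is the step I view as the main obstacle — is that $S_H$ has order exactly $2\cdot\order F^{2^{n-2}}$, so that the two factors cancel against the $V_*(F\overline G)$ contribution and leave $\order{V_{\sigma_3}(FC_{2^n})}=\order F^{2^{n-1}}$. Concretely one must show that $\{xx^{\sigma_3}\mid x\in N^{\sigma_3}_\Psi\}$ fills out a subgroup of $I(H)^+$ of index $\order F^{2^{n-2}}/2$ inside the full $I(H)^+$ of order $\order F^{2^{n-1}}$; equivalently, analyzing the $F$-linear map $1+z\mapsto (1+z)(1+z)^{\sigma_3}=1+z+z^{\sigma_3}+zz^{\sigma_3}$ and its interaction with the action of $\sigma_3$ on $I(H)$ — on $I(H)$ the involution $\sigma_3$ acts, its fixed space and the image of $1-\sigma_3$ have dimensions that must be pinned down using the basis $\{u(1+a^{2^{n-1}})\colon u\in R(\overline G)\}$ and the explicit action $a\mapsto a^{2^{n-1}-1}$. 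This dimension bookkeeping, together with a rank computation for the quadratic correction term $zz^{\sigma_3}$, is where the real work lies; the rest is assembling the counting identity above.
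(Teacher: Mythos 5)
Your framework is exactly the paper's: you choose the same subgroup $H=\gp{a^{2^{n-1}}}$, observe that the induced involution on $F\overline G$ is the canonical one so that \lemref{szakacs} gives $\order{V_*(F\overline G)}=2\order{F}^{2^{n-2}}$, note that $x\mapsto xx^{\sigma_3}$ is a homomorphism on $N^{\sigma_3}_\Psi$ (by commutativity) with kernel $V_{\sigma_3}(FC_{2^n})$ and image $S_H\le I(H)^+$, and arrive at the same counting identity $\order{V_{\sigma_3}(FC_{2^n})}=\order{I(H)}\cdot\order{V_*(F\overline G)}/\order{S_H}$. All of that is correct. But you then leave the one substantive step --- proving $\order{S_H}=2\cdot\order{F}^{2^{n-2}}$ --- as an ``expectation'' and an acknowledged ``main obstacle,'' so the proposal does not actually establish the lemma. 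This is not a routine bookkeeping step: $S_H$ is the image of a quadratic (not linear) map, and the delicate part is precisely the coefficient landing on $H$ itself.

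The paper closes this gap by an explicit generator analysis. For the lower bound it exhibits, for each $i\in\{1,\dots,2^{n-2}-1\}$ and each $\alpha_i\in F$, the element $x_i=1+\alpha_i(a^i+a^{2^{n-1}+i})$ with $x_ix_i^{\sigma_3}=1+\alpha_i(a^i+a^{2^{n-1}-i})\widehat H$; it gets $1+\beta a^{2^{n-2}}\widehat H$ from squaring a suitable symmetric element together with the fact that squaring is an automorphism of $U(F)$ (so every $\beta$ has a square root); and it gets $a^{2^{n-1}}=a\cdot a^{\sigma_3}$. For the upper bound --- ruling out $\gamma a^{2^{n-1}}\in S_H$ for $\gamma\ne 1$, which is what pins the factor $2$ rather than $\order{F}$ --- it writes $x=x_1+x_2a$ with $x_1,x_2\in FC_{2^n}^2$, reduces the $H$-component of $xx^{\sigma_3}$ to expressions $zz^*$ for the canonical involution $*$, and invokes the external fact (\cite[Lemma 3]{Balogh_Bovdi_II}) that $a^{2^{n-1}}$ never lies in $\supp(zz^*)$ and that the trace of $zz^*$ equals the augmentation of $z$. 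Your proposed ``fixed space of $\sigma_3$ on $I(H)$ plus a rank computation for $zz^{\sigma_3}$'' does not obviously capture this last point, and without it the order of $S_H$ is only bounded, not determined. So the proposal is a correct reduction along the paper's own lines, but the heart of the proof is missing.
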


\begin{proof}
Let $H=G_{\sigma_3}=\gp{a^{2^{n-1}}}$ be. Let us denote by $\widehat{H}$ the sum of all the elements of $H$. 
It can be seen that $xx^{\sigma_3}\in I(H)^+$ for all $x\in N^{\sigma_3}_{\Psi}$. Therefore 

\begin{equation}
xx^{\sigma_3}=\sum_{g\in G_{\sigma_3}} \delta_g g + \sum_{g\not\in G_{\sigma_3}} \alpha_g (g+g^{\sigma_3})\widehat{H}+\beta a^{2^{n-2}}\widehat{H}
\end{equation}
for some $\alpha_g,\beta,\delta_g \in F$.

We will prove that $S_H$ is generated by the elements $a^{2^{n-1}}$, $1+\beta a^{2^{n-2}}\widehat{H}$ and $1+\alpha_i (a^i+a^{2^{n-1}-i}) \widehat{H}$,  where $i\in\{1,\cdots,2^{n-2}-1\}$. 

Let $x_i=1+\alpha_i (a^i+a^{{2^{n-1}}+i})$ be. Then 
\[
\begin{split}
x_ix_i^{\sigma_3}&=(1+\alpha_i a^i+\alpha_i a^{{2^{n-1}}+i})(1+\alpha_i a^{{2^{n-1}}-i}+\alpha_ia^{-i})\\&=1+\alpha_i (a^i+a^{2^{n-1}-i}+a^{2^{n-1}+i}+a^{-i})=1+\alpha_i (a^i+a^{2^{n-1}-i}) \widehat{H}.
\end{split}
\]
Since $x_ix_i^{\sigma_3}\in I(H)^+$, $x_i$ belongs to $N^{\sigma_3}_{\Psi}$ so we have proved that $1+\alpha_i (a^i+a^{2^{n-1}-i}) \widehat{H} \in S_H$ for every $\alpha_i \in F$.

Evidently, $y=1+\gamma (a^{2^{n-3}}+a^{-{2^{n-3}}})\ne 1$ is a $\sigma_3$-symmetric element if $n>3$ and $y=1+\gamma (a+a^3)$ is also a $\sigma_3$-symmetric element if $n=3$.
Then we can calculate that
\[
\begin{split}
yy^{\sigma_3}&=y^2=1+\gamma^2 (a^{2^{n-2}}+a^{-{2^{n-2}}})=1+\gamma^2 a^{2^{n-2}}\widehat{H}.
\end{split}
\]

It is well-known that the group of units of $F$, denoted by $U(F)$, is a cyclic group of odd order. Therefore $\eta(\alpha)=\alpha^2$ is an automorphism of $U(F)$ and we can pick $\gamma\in F$ such that $\gamma^2=\beta$. Therefore $1+\beta a^{2^{n-2}}\widehat{H}$ also belongs to $S_H$. 

It is easy to see that $a^{2^{n-1}}$ belongs to $S_H$. Indeed, $a\cdot a^{\sigma_3}=a^{2^{n-1}}$. 

Now, we have to prove that $\gamma a^{2^{n-1}}$ does not belong to $S_H$, for any $\gamma \ne 1$. 
Every element of $FG$ can be expressed as
$x_1+x_2a$, where $x_1,x_2 \in FC_{2^n}^2$. A simple calculation shows that 
\[
(x_1+x_2a)(x_1+x_2a)^{\sigma_3}=x_1x_1^*+x_2x_2^*a^{2^{n-1}}+(x_1^*x_2+x_1x_2^*a^{2^{n-1}})a,
\]
where $*$ is the canonical involution of $FG$. According to \cite[Lemma 3]{Balogh_Bovdi_II}  $a^{2^{n-1}}$ does not belong to the support of $zz^*$ for every $z\in FG$ and the trace of $zz^*$ is equal to the augmentation of $z$. Thus, $x_1x_1^*+x_2x_2^*a^{2^{n-1}}=\gamma a^{2^{n-1}}$ if and only if $\gamma=1$. 

It is easy to check that  
\[
\begin{split}
\big(1+\alpha_i (a^i+a^{2^{n-1}-i}) \widehat{H} \big)&
\big(1+\beta_j (a^j+a^{2^{n-1}-j}) \widehat{H} \big)=\\
&1+\big(\alpha_i(a^i+a^{2^{n-1}-i})+\beta_j (a^{2^{n-1}+j}+a^{-j})\big)\widehat{H}
\end{split}
\]
and
\[
\begin{split}
\big(1+\alpha_i (a^i+a^{2^{n-1}-i}) \widehat{H} \big)&\big(1+\beta a^{2^{n-2}}\widehat{H} \big)=\\
&1+\big(\alpha_i(a^i+a^{2^{n-1}-i})+\beta a^{2^{n-2}}\big)\widehat{H}.
\end{split}
\]
Thus, $S_H$ being elementary abelian group confirms that $\order{S_H}=2\cdot \order{F}^{2^{n-2}}$.

According to the fact that
$|I(H)^+|=|I(H)|$ and the homomorphism theorem we have that 
\[
|V_{\sigma_3}(FC_{2^n})|= \frac{|I(H)|\cdot|V_*(F\overline{G})|}{|S_H|}=\order{F}^{\frac{\order{C_{2^n}}}{2}}\cdot \frac{|V_*(F\overline{G})|}{|S_H|}.
\]

Using Lemma \ref{szakacs} we get
\[
|V_{\sigma_3}(FC_{2^n})|=\frac{\order{F}^{2^{n-1}}\cdot 2\cdot \order{F}^{2^{n-2}}}{2\cdot \order{F}^{2^{n-2}}}=\order{F}^{2^{n-1}},
\]
which proves the lemma.

\end{proof}

Let us consider the case when $\circledast = \sigma_4$. Note that indices of all the coefficients in the statement and the proof of the following Lemma are considered as elements of $F_{2^n}$.


\begin{lemma}\label{lemma_5}
	Let $F$ be a finite field of characteristic two and $x=\displaystyle\sum_{i=0}^{2^n-1}\alpha_i a^i$, $x\in V(FC_{2^n})$.
	Then 
	\[
	\begin{split}
	xx^{\sigma_4}&=\sum_{i=0}^{2^{n-2}-1}  (\alpha_{2i}+\alpha_{2i+2^{n-1}})^2 a^{4i}\\
	&+\sum_{i=0}^{2^{n-2}-1}  (\alpha_{2(i+2^{n-3})+1}+\alpha_{2(i+2^{n-3})+1+2^{n-1}})^2 a^{4i+2}\\
	&+\sum_{j=0}^{2^{n-2}-1} \sum_{i=0}^{2^{n-2}-1} (\alpha_{2i}+\alpha_{2i+2^{n-1} })(\alpha_{2j+1-2i+2^{n-1} }+\alpha_{2j+1-2i})\cdot\\
	&\cdot  (a^{2j+1}+a^{2j+1+2^{n-1} }).
	\end{split}
	\]

\end{lemma}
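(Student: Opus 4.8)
The plan is to compute $xx^{\sigma_4}$ directly by brute expansion, keeping careful track of exponents modulo $2^n$, and then to exploit the characteristic-two identity $(u+v)^2 = u^2 + v^2$ together with the special form of $\sigma_4$ to collapse the double sum into the stated shape. First I would write $x = \sum_{i=0}^{2^n-1}\alpha_i a^i$ and, since $a^{\sigma_4} = a^{2^{n-1}+1}$, note that $(a^i)^{\sigma_4} = a^{i(2^{n-1}+1)} = a^{i\cdot 2^{n-1}+i}$, where the exponent $i\cdot 2^{n-1}$ is $0$ if $i$ is even and $2^{n-1}$ if $i$ is odd. Thus $\sigma_4$ fixes every even power of $a$ and sends each odd power $a^i$ to $a^{i+2^{n-1}}$ (another odd power). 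So $x^{\sigma_4} = \sum_{i \text{ even}}\alpha_i a^i + \sum_{i \text{ odd}}\alpha_i a^{i+2^{n-1}} = \sum_j \beta_j a^j$ where $\beta_j = \alpha_j$ for $j$ even and $\beta_j = \alpha_{j+2^{n-1}}$ (indices mod $2^n$) for $j$ odd.

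Next I would split $x = x_{\mathrm{ev}} + x_{\mathrm{odd}}$ into its even-exponent and odd-exponent parts and likewise for $x^{\sigma_4}$, obtaining $xx^{\sigma_4} = x_{\mathrm{ev}}x^{\sigma_4}_{\mathrm{ev}} + x_{\mathrm{odd}}x^{\sigma_4}_{\mathrm{odd}} + \big(x_{\mathrm{ev}}x^{\sigma_4}_{\mathrm{odd}} + x_{\mathrm{odd}}x^{\sigma_4}_{\mathrm{ev}}\big)$. The first two products land in the span of even powers of $a$ (even $+$ even, odd $+$ odd $\equiv$ even mod $2$), and the cross terms land in the span of odd powers; this is exactly the partition visible in the claimed formula (the first two lines have exponents $4i$ and $4i+2$, i.e. all even residues, and the last line has exponents $2j+1$ and $2j+1+2^{n-1}$, i.e. all odd residues). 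For the even part $x_{\mathrm{ev}}x^{\sigma_4}_{\mathrm{ev}} = \big(\sum_{k}\alpha_{2k}a^{2k}\big)^2 = \sum_k \alpha_{2k}^2 a^{4k}$ since squaring is additive in characteristic two and $a^{4k}$ runs over all powers divisible by $4$; collecting $a^{4k}$ with $a^{4k+2^n} = a^{4k}$ (which forces pairing $2k$ with $2k+2^{n-1}$ once we reduce $4k \bmod 2^n$) produces the coefficient $(\alpha_{2i}+\alpha_{2i+2^{n-1}})^2$. The odd part $x_{\mathrm{odd}}x^{\sigma_4}_{\mathrm{odd}}$ is handled the same way: $x^{\sigma_4}_{\mathrm{odd}}$ is just $x_{\mathrm{odd}}$ with each odd exponent shifted by $2^{n-1}$, so the product is $\sum_{\text{odd }i,j}\alpha_i\alpha_j a^{i+j+2^{n-1}}$; since $i+j$ is even, setting $i+j+2^{n-1} \equiv 4i'+2 \bmod 2^n$ and regrouping gives the squared coefficient $(\alpha_{2(i+2^{n-3})+1}+\alpha_{2(i+2^{n-3})+1+2^{n-1}})^2$ after one identifies which residues of odd $+$ odd hit $\equiv 2 \bmod 4$ — the shift by $2^{n-3}$ in the index is the bookkeeping for that. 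Finally the cross term is $\sum_{\text{even }2i}\sum_{\text{all }j}\alpha_{2i}\beta_{j}a^{2i+j}$ restricted to odd $j$, and writing $2i + j = 2j'+1$ (resp. with the extra $2^{n-1}$ shift from $\beta$ on the odd part) gives the bilinear coefficient $(\alpha_{2i}+\alpha_{2i+2^{n-1}})(\alpha_{2j+1-2i+2^{n-1}}+\alpha_{2j+1-2i})$ multiplying $(a^{2j+1}+a^{2j+1+2^{n-1}})$, after pairing each odd power with its $\sigma_4$-partner.

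The main obstacle is purely the index arithmetic modulo $2^n$: one must show that after reducing exponents mod $2^n$ the coefficient of $a^{4i}$ (and of $a^{4i+2}$, and of $a^{2j+1}$) assembles precisely into the indicated sum or difference of two $\alpha$'s, i.e. that exactly two terms of the naive expansion collide on each basis element and that they collide with the correct index offsets ($+2^{n-1}$ in the right places). I would organise this by treating the exponent as an element of $\mathbb{Z}/2^n\mathbb{Z}$ throughout (as the lemma's remark licenses), fixing the target exponent and solving the linear congruence for the summation index; the solution set has size two in each case, which is what creates the two-term sums, and the parity/divisibility-by-$4$ conditions single out the stated ranges $0 \le i \le 2^{n-2}-1$. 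Everything else — the vanishing of an even $\times$ odd contribution to an even exponent, the additivity of squaring, the fact that $x \in V(FC_{2^n})$ is never actually used beyond making $xx^{\sigma_4}$ a unit — is routine, so I would state those in one line each and spend the write-up on the congruence bookkeeping for the three families of exponents.
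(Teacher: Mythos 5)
Your proposal is correct and follows essentially the same route as the paper: compute $x^{\sigma_4}$ explicitly, split the product by parity of the exponent, and resolve the congruences modulo $2^n$ so that indices pair up as $2k$ and $2k+2^{n-1}$. Your even/odd decomposition with $x_{\mathrm{ev}}^{\sigma_4}=x_{\mathrm{ev}}$, $x_{\mathrm{odd}}^{\sigma_4}=a^{2^{n-1}}x_{\mathrm{odd}}$ and the Frobenius identity $(u+v)^2=u^2+v^2$ is a slightly cleaner way to obtain the same cancellations that the paper gets by noting that each off-diagonal summand occurs twice, but the underlying bookkeeping is identical.
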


\begin{proof}
	It is clear that
	$x^{\sigma_4}=\sum_{i=0}^{2^{n-1}-1}\alpha_{2i} a^{2i}+\sum_{i=0}^{2^{n-1}-1}\alpha_{2i+1+2^{n-1}} a^{2i+1}$.
	Thus the coefficient of $a^l$ in $xx^{\sigma_4}$, where $l \in \{1,3,\ldots,2^n-1\}$ is 
	\[
	\sum_{i=0}^{2^{n-1}-1} \alpha_{2i}\alpha_{l-2i+2^{n-1}}+
	\sum_{i=0}^{2^{n-1}-1} \alpha_{2i+1}\alpha_{l-2i-1}.
	\]
	Because $l-2i-1$ is even we can rearrange the second term:
	\[\sum_{i=0}^{2^{n-1}-1} \alpha_{2i+1}\alpha_{l-2i-1}=\sum_{k=0}^{2^{n-1}-1} \alpha_{l-2k}\alpha_{2k},
	\] where $2k = l-(2i+1)$.
	Therefore,
	\[
	\begin{split}
		&\sum_{i=0}^{2^{n-1}-1} \alpha_{2i}\alpha_{l-2i+2^{n-1}}+
		\sum_{i=0}^{2^{n-1}-1} \alpha_{2i+1}\alpha_{l-2i-1}\\&=	\sum_{k=0}^{2^{n-1}-1} \alpha_{2k}(\alpha_{l-2k+2^{n-1}}+\alpha_{l-2k}).
	\end{split}
	\]

Consider the coefficient $\alpha_{2k+2^{n-1}}=\alpha_{2(k+2^{n-2})}$ for some $k$. We have that
\[
\alpha_{l-2(k+2^{n-2})+2^{n-1}}+\alpha_{l-2(k+2^{n-2})}=\alpha_{l-2k}+\alpha_{l-2k+2^{n-1}}.
\]
As a consequence we have proved that the coefficients of $a^l$ and $a^{l+2^{n-1}}$ are equal.
Therefore	
\[
\begin{split}
&\sum_{k=0}^{2^{n-1}-1} \alpha_{2k}(\alpha_{l-2k+2^{n-1}}+\alpha_{l-2k})\\
	&=	\sum_{k=0}^{2^{n-2}-1} (\alpha_{2k}+\alpha_{2k+2^{n-1}})(\alpha_{l-2k+2^{n-1}}+\alpha_{l-2k}).
	\end{split}
\]

	Let us consider the case when $l$ is even. Then the coefficient of $a^l$ in $xx^{\sigma_4}$ is 
	\begin{equation}\label{eq1}
	\sum_{i=0}^{2^{n-1}-1} \alpha_{2i}\alpha_{l-2i}+
	\sum_{i=0}^{2^{n-1}-1} \alpha_{2i+1}\alpha_{l-2i-1+2^{n-1}}.
	\end{equation}
	Assume that $l\equiv 4t \pmod{2^n}$ for some $t \in \{0,\ldots,2^{n-2}\}$. 	Consider the first summand of (\ref{eq1}).	Let $2i \equiv l-2i \pmod{2^n}$. Then, as $l\equiv 4t \pmod{2^n}$, we have that $4i \equiv 4t \pmod{2^n}$ or $2i \equiv 2t \pmod{2^{n-1}}$. Therefore if $2i \equiv 2t \pmod{2^{n-1}}$ then $\alpha_{2i}\alpha_{l-2i}$ is equal either to $\alpha_{2t}^2$ or to $\alpha_{2t+2^{n-1}}^2$. Now, suppose that $2i \not\equiv l-2i \pmod{2^n}$. Consider $s\in\{0,\ldots,2^{n-1}-1\}$ such that $2s \equiv l-2i \pmod{2^{n}}$. Then $l-2s \equiv  l - (l-2i) \equiv 2i \pmod{2^{n}}$, i.e. each of the summands appear twice, so
	\[
	\sum_{i=0}^{2^{n-1}-1} \alpha_{2i}\alpha_{l-2i}=\alpha_{2t}^2+\alpha_{2t+2^{n-1}}^2 = (\alpha_{2t}+\alpha_{2t+2^{n-1}})^2.
	\]
	
	Consider the second summand of (\ref{eq1}). 

	Suppose that there exists $i \in \{0,\ldots,2^n-1\}$ such that $2i+1 \equiv l-(2i+1)+2^{n-1} \pmod{2^{n}}$. Taking into account that $l\equiv 4t \pmod{2^n}$ we get $2(2i+1)\equiv 4t+2^{n-1} \pmod{2^{n}}$. So $2i+1\equiv 2t+2^{n-2} \pmod{2^{n-1}}$ which is a contradiction. This means that $2i+1 \not\equiv l-(2i+1)+2^{n-1} \pmod{2^{n}}$ for each $i \in \{0,\ldots,2^n-1\}$.
	
	Consider $s\in\{0,\ldots,2^{n-1}-1\}$ such that $2s+1 \equiv l-(2i+1)+2^{n-1} \pmod{2^{n}}$. Then 
	\[
	l-(2s+1)+2^{n-1} \equiv  l - (l-(2i+1)+2^{n-1}) + 2^{n-1} \equiv 2i+1.
	\]
	This means that each summand of the form  $\alpha_{2i+1}\alpha_{l-2i-1+2^{n-1}}$ appears twice, thus we can conclude that 
	\[
	\sum_{i=0}^{2^{n-1}-1} \alpha_{2i+1}\alpha_{j-2i-1+2^{n-1}}=0.
	\]

	Now assume that $l\equiv 4t+2 \pmod{2^n}$. Then by similar considerations $\sum_{i=0}^{2^{n-1}-1} \alpha_{2i+1}\alpha_{l-2i-1+2^{n-1}}=\alpha_{2(t+2^{n-3})+1}^2+\alpha_{2(t+2^{n-3})+1+2^{n-1}}^2$ and $\sum_{i=0}^{2^{n-1}-1} \alpha_{2i}\alpha_{l-2i}=0$. 
	
	Finally, summing up the coefficients for all $a^l$, $l\in\{0,\ldots,2^n-1\}$ we have proved the statement of the lemma.
\end{proof}

\begin{corollary}\label{cor_1}
	Let $F$ be a finite field of characteristic two and $x=\displaystyle\sum_{i=0}^{2^n-1}\alpha_i a^i\in V(FC_{2^n})$.
	Then 
	\[
	xx^{\sigma_4}=\sum_{i=0}^{2^{n-1}-1} \beta_{2i} a^{2i}+\sum_{i=0}^{2^{n-1}-1} \beta_{2i+1} a^{2i+1},
	\]
	and $\beta_{2i+1}=f_{i}(\beta_0,\beta_2,\ldots,\beta_{2^n-2})$ for some functions $f_{i}$, where ${0 \leq i < 2^{n-1}}$. Furthermore, $\sum_{i=0}^{2^{n-1}-1} \beta_{2i+1}=0$. 
\end{corollary}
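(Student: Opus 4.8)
The plan is to extract everything from the explicit formula for $xx^{\sigma_4}$ furnished by \lemref{lemma_5}. That formula already displays $xx^{\sigma_4}$ in the required shape $\sum_{i}\beta_{2i}a^{2i}+\sum_{i}\beta_{2i+1}a^{2i+1}$, and it identifies the coefficients: for $0\le i<2^{n-2}$ one has
\[
\beta_{4i}=(\alpha_{2i}+\alpha_{2i+2^{n-1}})^2,\qquad
\beta_{4i+2}=(\alpha_{2(i+2^{n-3})+1}+\alpha_{2(i+2^{n-3})+1+2^{n-1}})^2,
\]
while the coefficient of $a^{2j+1}$ is the double sum $\sum_{i}(\alpha_{2i}+\alpha_{2i+2^{n-1}})(\alpha_{2j+1-2i+2^{n-1}}+\alpha_{2j+1-2i})$, which by the same lemma equals the coefficient of $a^{2j+1+2^{n-1}}$.

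Next I would invoke the field-theoretic fact recalled in the proof of \lemref{lemma_3}: since $U(F)$ is cyclic of odd order, the map $t\mapsto t^2$ is bijective on $F$, so every element of $F$ has a unique square root, denoted $\sqrt{\,\cdot\,}$. Applying it to the even coefficients gives $\alpha_{2i}+\alpha_{2i+2^{n-1}}=\sqrt{\beta_{4i}}$ for $0\le i<2^{n-2}$, and $\alpha_m+\alpha_{m+2^{n-1}}=\sqrt{\beta_{4i+2}}$ whenever $m\equiv 2(i+2^{n-3})+1\pmod{2^{n-1}}$. Two points make this legitimate: indices live in $\z_{2^n}$ and $\alpha_m+\alpha_{m+2^{n-1}}$ is unchanged under $m\mapsto m+2^{n-1}$, hence depends only on $m$ mod $2^{n-1}$; and as $i$ runs over $\{0,\dots,2^{n-2}-1\}$ the residue $2(i+2^{n-3})+1=2i+2^{n-2}+1$ exhausts all odd classes mod $2^{n-1}$. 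Consequently, for every even index $p$ and every odd index $q$, both $\alpha_p+\alpha_{p+2^{n-1}}$ and $\alpha_q+\alpha_{q+2^{n-1}}$ are square roots of suitable even-indexed $\beta$'s.

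Substituting into the double sum now finishes the structural claim: in $\beta_{2j+1}=\sum_{i}(\alpha_{2i}+\alpha_{2i+2^{n-1}})(\alpha_{2j+1-2i+2^{n-1}}+\alpha_{2j+1-2i})$ the first factor has even index $2i$ and the second has odd index $2j+1-2i$, so each summand is a product of two square roots of even-indexed $\beta$'s; hence $\beta_{2j+1}$ is a function of $\beta_0,\beta_2,\dots,\beta_{2^n-2}$ for $0\le j<2^{n-2}$, and for $2^{n-2}\le j<2^{n-1}$ the equality $\beta_{2j+1}=\beta_{2(j-2^{n-2})+1}$ extends this to all $0\le i<2^{n-1}$, yielding the functions $f_i$. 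For the last assertion, the symmetry $\beta_l=\beta_{l+2^{n-1}}$ for odd $l$ gives
\[
\sum_{i=0}^{2^{n-1}-1}\beta_{2i+1}=\sum_{j=0}^{2^{n-2}-1}\bigl(\beta_{2j+1}+\beta_{2j+1+2^{n-1}}\bigr)=0
\]
over $F$, since each bracket is $\beta_{2j+1}+\beta_{2j+1}=0$. The only point requiring care is the index arithmetic mod $2^n$ versus mod $2^{n-1}$ in the middle paragraph; once that is pinned down the rest is mechanical substitution.
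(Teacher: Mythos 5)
Your proposal is correct and follows essentially the same route as the paper: it reads everything off the explicit formula of \lemref{lemma_5}, uses that squaring is a bijection on $F$ to recover $\alpha_m+\alpha_{m+2^{n-1}}$ as square roots of even-indexed $\beta$'s, and derives $\sum_i\beta_{2i+1}=0$ from the coincidence of the coefficients of $a^{2j+1}$ and $a^{2j+1+2^{n-1}}$ in characteristic two. The paper's own proof is just a terser version of this; your treatment of the index arithmetic modulo $2^{n-1}$ fills in details the paper leaves implicit.
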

\begin{proof}
	Taking into account Lemma \ref{lemma_5} and that $\eta(\delta)=\delta^2$ is an automorphism on $F$ we can conclude that the coefficient $\beta_k$ depends only on the coefficients $\beta_0,\beta_2,\ldots,\beta_{2^n-2}$ for every odd $k$.
	
	According to Lemma \ref{lemma_5}, the coefficients of $a^{2j+1}$ and $a^{2j+1+2^{n-1}}$ coincide for each $0\leq j <2^{n-1}$, therefore $\sum_{i=0}^{2^{n-1}-1} \beta_{2i+1}=0$.
\end{proof}

\begin{lemma}\label{lemma_6}
	Let $F$ be a finite field of characteristic two. Then
	$V_{\sigma_4}(FC_{2^n})$ is an elementary abelian group of order $\order{F}^{2^{n-1}}$.
\end{lemma}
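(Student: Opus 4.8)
The statement has two parts: the order and the elementary abelian structure. The plan is to treat the order first via a homomorphism/counting argument. Since $FC_{2^n}$ is commutative, the map $\psi\colon V(FC_{2^n})\to V(FC_{2^n})$ given by $\psi(x)=xx^{\sigma_4}$ is a group homomorphism, and $\ker\psi=\{x : xx^{\sigma_4}=1\}=V_{\sigma_4}(FC_{2^n})$. Since $|V(FC_{2^n})|=|F|^{2^n-1}$, it suffices to show $|\im\psi|=|F|^{2^{n-1}-1}$. By Corollary \ref{cor_1} every $y\in\im\psi$ is completely determined by its coefficients at the even powers of $a$, and those satisfy $\sum_{i=0}^{2^{n-1}-1}\beta_{2i}=1$ (the augmentation of $xx^{\sigma_4}$ is $1$ while the odd coefficients sum to $0$); hence $|\im\psi|\le|F|^{2^{n-1}-1}$.

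For the opposite inequality I would exhibit a preimage for every candidate even part. Put $\gamma_k=\alpha_k+\alpha_{k+2^{n-1}}$ for $0\le k<2^{n-1}$. By Lemma \ref{lemma_5} the coefficients of $xx^{\sigma_4}$ at the even powers of $a$ are, up to a permutation of the indices, exactly $\gamma_0^2,\gamma_1^2,\dots,\gamma_{2^{n-1}-1}^2$. Given any target even part $(\beta_{2i})$ with $\sum_i\beta_{2i}=1$, set each $\gamma_k$ equal to the unique square root in $F$ of the corresponding coefficient; since the square root is additive in characteristic two, $\sum_k\gamma_k=\bigl(\sum_i\beta_{2i}\bigr)^{1/2}=1$. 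Choosing any $(\alpha_i)$ with $\alpha_k+\alpha_{k+2^{n-1}}=\gamma_k$ for $0\le k<2^{n-1}$ then gives $\sum_i\alpha_i=\sum_k\gamma_k=1$, so $x=\sum_i\alpha_i a^i\in V(FC_{2^n})$ and $xx^{\sigma_4}$ has the prescribed even part. Hence $|\im\psi|=|F|^{2^{n-1}-1}$ and $|V_{\sigma_4}(FC_{2^n})|=|F|^{2^{n-1}}$.

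For the exponent, write $\omega=a^{2^{n-1}}+1$. Then $\omega=(1+a)^{2^{n-1}}$, $\omega^{\sigma_4}=\omega$ (since $\sigma_4$ fixes $a^{2^{n-1}}$), and $\omega^2=a^{2^n}+1=0$. Identifying $FC_{2^n}$ with $F[t]/(t^{2^n})$ via $t=1+a$, an element $x\in V(FC_{2^n})$ satisfies $x^2=1$ if and only if $(1+x)^2=0$, which holds precisely when $1+x\in t^{2^{n-1}}FC_{2^n}=\omega FC_{2^n}$; thus $\{x\in V(FC_{2^n}) : x^2=1\}=1+\omega FC_{2^n}$, a set of cardinality $|F|^{2^{n-1}}$. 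If $x=1+\omega z$, then $x^{\sigma_4}=1+\omega z^{\sigma_4}=x+\omega(z+z^{\sigma_4})$, and a direct comparison of coefficients shows $z+z^{\sigma_4}\in\omega FC_{2^n}$ (the coefficients of $z+z^{\sigma_4}$ at $a^i$ and $a^{i+2^{n-1}}$ coincide for every $i$); therefore $\omega(z+z^{\sigma_4})\in\omega^2 FC_{2^n}=0$ and $x^{\sigma_4}=x=x^{-1}$. Hence $1+\omega FC_{2^n}\subseteq V_{\sigma_4}(FC_{2^n})$, and comparing orders with the previous paragraph forces equality; in particular $V_{\sigma_4}(FC_{2^n})$ is elementary abelian of order $|F|^{2^{n-1}}$.

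The step I expect to be most delicate is the surjectivity in the second paragraph: one must unwind the index identifications hidden in Lemma \ref{lemma_5} to see that the even coefficients of $xx^{\sigma_4}$ really do sweep out all the squares $\gamma_k^2$, and then invoke additivity of the Frobenius square root so that the augmentation constraint $\sum_i\alpha_i=1$ comes for free. The containment $z+z^{\sigma_4}\in\omega FC_{2^n}$ is the other point that needs care, but it reduces to a one-line comparison of coefficients.
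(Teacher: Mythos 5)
Your proposal is correct. The first half (the order) follows the same route as the paper: both use the homomorphism $x\mapsto xx^{\sigma_4}$ with kernel $V_{\sigma_4}(FC_{2^n})$ and count the image via Corollary~\ref{cor_1}. You are in fact more careful than the paper here: the paper asserts $\order{S_{\sigma_4}}=\order{F}^{2^{n-1}-1}$ by counting ``free coefficients,'' which only gives the upper bound on the image; your explicit surjectivity argument (the even coefficients of $xx^{\sigma_4}$ are a permutation of the squares $\gamma_k^2$ with $\gamma_k=\alpha_k+\alpha_{k+2^{n-1}}$, and the Frobenius square root is additive, so every admissible even part with coefficient sum $1$ is realized) supplies the matching lower bound that the paper leaves implicit. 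The second half is genuinely different. The paper reads off from Lemma~\ref{lemma_5} that $xx^{\sigma_4}=1$ forces $x$ to be $\sigma_4$-symmetric, whence $x^2=xx^{\sigma_4}=1$. You instead identify $FC_{2^n}$ with $F[t]/(t^{2^n})$, observe that the set of involutions in $V(FC_{2^n})$ is exactly $1+\omega FC_{2^n}$ with $\omega=1+a^{2^{n-1}}$, $\omega^2=0$, and show this whole set lies in $V_{\sigma_4}(FC_{2^n})$ (using $\omega^{\sigma_4}=\omega$ and $z+z^{\sigma_4}\in\omega FC_{2^n}$); the order count from the first half then forces equality. Both arguments are sound; yours has the added benefit of an explicit closed-form description $V_{\sigma_4}(FC_{2^n})=1+(1+a^{2^{n-1}})FC_{2^n}$, though it makes the elementary abelian conclusion depend on the order computation, whereas the paper's symmetry argument establishes exponent $2$ independently of the order.
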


\begin{proof}
	The mapping $\varphi(x)=xx^{\sigma_4}$ is a homomorphism on $V(FG)$. Denote 
	by $S_{\sigma_4}$ the image of this homomorphism.
	The kernel of $\varphi(x)$ coincides with the unitary subgroup $V_{\sigma_4}(FG)$.
	Therefore 
	\[
	V(FG)/V_{\sigma_4}(FG) \cong S_{\sigma_4}.
	\]
	
	According to Corollary \ref{cor_1}, the number of the free coefficients in $xx^{\sigma_4}$ is $2^{n-1}-1$. Indeed, $\beta_k$'s are not independent variables for odd $k$ and $xx^{\sigma_4}$ being unit implies $\sum_{i=0}^{2^{n}-1} \beta_{i}=1$. We see that	 $\order{S_{\sigma_4}}=\order{F}^{2^{n-1}-1}$.
	Therefore 
	\[
	\order{V_{\sigma_4}(FG)}=\frac{\order{V(FG)}}{\order{S_{\sigma_4}}}=\frac{\order{F}^{2^n-1}}{\order{F}^{2^{n-1}-1}}=\order{F}^{2^{n-1}}.
	\]
	
	Now, we will prove that $V_{\sigma_4}(FC_{2^n})$ is elementary abelian. According to Lemma \ref{lemma_5} if $xx^{\sigma_4}=1$, then $x$ is a $\sigma_4$-symmetric element. Therefore $x^2=1$ which proves the lemma.
	
\end{proof}

\begin{proof}[Proof of the main theorem]
	Since the order of the $*$-unitary subgroup is the largest it cannot be isomorphic to any $\circledast$-unitary subgroup.
	
	Since $(a^{2i})^{\sigma_3}=a^{-2i}$ and $(a^{2i+1})^{\sigma_3}=a^{2^{n-1}-2i-1}$ we conclude that $V_{\sigma_3}(FC_{2^n})\cap C_{2^n}=\gp{a^2}$. Therefore the exponent of $V_{\sigma_3}(FC_{2^n})$ is not less than $2^{n-1}$. Keeping in mind that  $V_{\sigma_4}(FC_{2^n})$ is elementary abelian by Lemma \ref{lemma_5}  we conclude that the unitary subgroups related to the involutions $\sigma_3$ and $\sigma_4$ are not isomorphic groups.
	
\end{proof}

However, in the general case when $G$ is abelian Theorem \ref{theorem_1} does not necessarily hold. Using GAP System \cite{GAP4} we can verify that some unitary subgroups of $F(C_8 \times C_2)$ , where $F$ is a field of two elements, are isomorphic. In this case there are six automorphisms of order $\leq 2$:
\[
\sigma_1 = \begin{cases}
a &\mapsto a\\
b &\mapsto b 
\end{cases};
\quad
\sigma_2 = \begin{cases}
a &\mapsto a^3 b\\
b &\mapsto b
\end{cases};
 \quad 
 \sigma_3 = \begin{cases}
a &\mapsto a b\\
b &\mapsto b
\end{cases};
\]
\[
\sigma_4 = \begin{cases}
a &\mapsto a\\
b &\mapsto a^2 b
\end{cases}; \quad
\sigma_5 = \begin{cases}
a &\mapsto a^3\\
b &\mapsto a^2 b
\end{cases}; \quad
\sigma_6 = \begin{cases}
a &\mapsto a^3\\
b &\mapsto b
\end{cases}.
\]
Unitary subgroups, induced by $\sigma_2$ and $\sigma_4$ are isomorphic to Klein four-group $C_2 \times C_2$.


\end{document}